\newtheorem{theorem}{Theorem}[section]
\newtheorem{lemma}[theorem]{Lemma}
\newtheorem{proposition}[theorem]{Proposition}
\newtheorem{corollary}[theorem]{Corollary}
\newcommand\be{\begin{equation}}
\newcommand\ee{\end{equation}}
\newcommand{\dx}{\,\text{\rm{}d}x}
\newcommand{\dt}{\,\text{\rm{}d}t}
\newcommand{\ds}{\,\text{\rm{}d}s}
\def\R{\mathbb  R}
\title{The regularity of the positive part\\
of functions in $L^2(I; H^1(\Omega)) \cap H^1(I; H^1(\Omega)^*)$\\
with applications to parabolic equations
}
\author{Daniel Wachsmuth%
\footnote{Institut f\"ur Mathematik,
Universit\"at W\"urzburg,
97074 W\"urzburg, Germany, {\tt daniel.wachsmuth@mathematik.uni-wuerzburg.de}}}
\begin{document}

\maketitle

\paragraph{Abstract.}
Let $u\in L^2(I; H^1(\Omega))$
with $\partial_t u\in L^2(I; H^1(\Omega)^*)$  be given.
Then we show by means of a counter-example that the positive part $u^+$ of $u$ has less regularity,
in particular it holds $\partial_t u^+ \not\in L^1(I; H^1(\Omega)^*)$ in general.
Nevertheless, $u^+$ satisfies an integration-by-parts formula,
which can be used to prove non-negativity of weak solutions of parabolic equations.

\paragraph{Keywords.}
Bochner integrable function, projection onto non-negative functions, parabolic equation

\paragraph{MSC classification.}
46E35, 
35K10

\section{Introduction}

In this  note, we are concerned with the regularity of the positive part of functions
from the function space
\[
 W:=\{ u \in L^2(I; H^1(\Omega)): \partial_t u\in L^2(I; H^1(\Omega)^*)\}
\]
of Bochner integrable functions.
Here, $I=(0,T)$, $T>0$, is an open interval, and $H^1(\Omega)$ denotes the usual Sobolev space on the domain $\Omega \subset \R^n$;
$\partial_t u$ denotes the weak derivative of $u$ with respect to the time variable $t\in I$.
The underlying spaces form a so-called evolution triple (or Gelfand triple) $H^1(\Omega) \subset L^2(\Omega) =   L^2(\Omega)^* \subset H^1(\Omega)^*$
with continuous and dense embeddings.
In the sequel, we will use the commonly applied abbreviations
\[
V:=H^1(\Omega), \quad H:=L^2(\Omega).
\]
For an introduction to these kind of function spaces and their various properties, we refer to e.g.\@
\cite[Section IV.1]{gagroza}, \cite[Section 7.2]{roubicek}, \cite[Chapter 25]{wloka}.

Let $u\in W$ be given. Let us denote its positive part by $u^+$,
\[
u^+(t,x) = \max( u(t,x),\ 0), \ t\in I, \ x\in \Omega.
\]
Due to the embedding $W\hookrightarrow L^2( I\times \Omega)$, the positive part is well-defined.
Moreover, since the mapping $u\mapsto u^+$ is bounded from $H^1(\Omega)$ to $H^1(\Omega)$, it
follows that for $u\in W$ also $u^+\in L^2( I; V)$ holds.
Here, the question arises whether $u\in W$ also implies $u^+\in W$.
The aim of the short note is to provide an counter-example of this claim, see Theorem \ref{theo210}.
Nevertheless, the following integration-by-parts formula holds true for all $u\in W$
\be\label{eqpi}
 \int_I \langle u_t(s), u^+(s)\rangle_{V^*,V} \ds = \frac12\|u^+(T)\|_H^2 - \frac12\|u^+(0)\|_H^2,
\ee
which enables us to show positivity of weak solutions of linear parabolic equations, see Section \ref{secpara}.


\section{The regularity of the positive part}

In this section, we study the mapping properties of $u\mapsto u^+$.
First, let us state the following well-known results:

\begin{proposition}\label{prop1}
 The mapping $u\mapsto u^+$ is Lipschitz continuous as mapping from $H$ to $H$.
 Furthermore it is bounded from $V$ to $V$, and for $u\in V$ it holds
 \[
  \nabla u^+(x) = \begin{cases} \nabla u(x) & \text{ if } u(x)>0\\  0 & \text{ if } u(x)\le 0 \end{cases}, \ x\in \Omega,
 \]
which implies $\|u^+\|_V \le \|u\|_V$.
\end{proposition}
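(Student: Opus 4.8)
The plan is to handle the three claims in sequence, moving from an elementary scalar fact to the Sobolev statement by smooth approximation.

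For the Lipschitz continuity from $H$ to $H$, I would use that the scalar map $t\mapsto t^+=\max(t,0)$ is $1$-Lipschitz on $\R$, i.e.\ $|a^+-b^+|\le|a-b|$ for all $a,b\in\R$ (verified by checking the sign cases of $a$ and $b$). Applying this pointwise to $u(x),v(x)$ and integrating over $\Omega$ gives $\|u^+-v^+\|_H\le\|u-v\|_H$, with constant $1$; the choice $v=0$ yields in particular $\|u^+\|_H\le\|u\|_H$.

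For the gradient formula I would regularize the kink of $t^+$ at the origin. Fix $\varepsilon>0$ and set $f_\varepsilon(t)=\sqrt{t^2+\varepsilon^2}-\varepsilon$ for $t>0$ and $f_\varepsilon(t)=0$ for $t\le0$. Then $f_\varepsilon\in C^1(\R)$ with $f_\varepsilon'(t)=t(t^2+\varepsilon^2)^{-1/2}$ for $t>0$ and $f_\varepsilon'(t)=0$ for $t\le0$, so $|f_\varepsilon'|\le1$ and $f_\varepsilon(0)=0$. By the chain rule for compositions of $H^1(\Omega)$-functions with $C^1$-functions having bounded derivative (proved by mollification; see the cited references), $f_\varepsilon(u)\in H^1(\Omega)$ with $\nabla f_\varepsilon(u)=f_\varepsilon'(u)\nabla u$ a.e.\ in $\Omega$. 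Letting $\varepsilon\to0$, one has pointwise a.e.\ that $f_\varepsilon(u(x))\to u^+(x)$ and $f_\varepsilon'(u(x))\to\chi_{\{u>0\}}(x)$, while $|f_\varepsilon(u)|\le|u|\in L^2(\Omega)$ and $|f_\varepsilon'(u)\nabla u|\le|\nabla u|\in L^2(\Omega)$, so dominated convergence gives $f_\varepsilon(u)\to u^+$ and $f_\varepsilon'(u)\nabla u\to\chi_{\{u>0\}}\nabla u$ in $L^2(\Omega)$. Since the distributional gradient is closed with respect to $L^2$-convergence, it follows that $u^+\in H^1(\Omega)$ with $\nabla u^+=\chi_{\{u>0\}}\nabla u$, which is precisely the stated formula (on $\{u=0\}$ both sides equal $0$).

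The norm bound is then immediate: $\|\nabla u^+\|_H^2=\int_{\{u>0\}}|\nabla u|^2\dx\le\|\nabla u\|_H^2$, which together with $\|u^+\|_H\le\|u\|_H$ yields $\|u^+\|_V^2=\|u^+\|_H^2+\|\nabla u^+\|_H^2\le\|u\|_V^2$, giving boundedness from $V$ to $V$. I expect the only genuinely delicate point to be the limit passage in the previous paragraph, namely identifying the $L^2$-limit of $\nabla f_\varepsilon(u)$ with the weak gradient of the $L^2$-limit $u^+$; the scalar estimates and the chain rule for smooth superpositions are standard.
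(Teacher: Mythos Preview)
Your argument is correct. The scalar $1$-Lipschitz bound gives the $H$-claim, the $C^1$ regularisation $f_\varepsilon$ is well chosen (continuous with matching one-sided derivatives at $0$, $|f_\varepsilon'|\le1$, $|f_\varepsilon(t)|\le|t|$), the chain rule for $C^1$ compositions with bounded derivative applies, and dominated convergence together with the closedness of the weak gradient in $L^2$ yields $\nabla u^+=\chi_{\{u>0\}}\nabla u$; the norm estimate then follows immediately.

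As for comparison with the paper: there is nothing to compare. Proposition~\ref{prop1} is stated without proof as a ``well-known result''; the paper simply records it and moves on. Your write-up is the standard textbook proof (the same regularisation appears, for instance, in Gilbarg--Trudinger and in Evans), so it supplies exactly the argument the paper omits and defers to the literature.
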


The following result is an obvious consequence.

\begin{corollary}
 Let $u\in W$ be given. Then $u^+ \in L^2(I;V) \cap C(\bar I; H)$, and it holds
 \[
  \|u^+\|_{L^2(I;V)},\ \|u^+\|_{C(\bar I; H)}\ \le\ \|u\|_W.
 \]
\end{corollary}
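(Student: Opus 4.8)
\emph{Proof (sketch, plan).}
The statement is obtained by transferring the elementary scalar inequalities $|s^+|\le|s|$ and $|s^+-r^+|\le|s-r|$, which underlie Proposition~\ref{prop1}, through the Bochner-space framework, using in addition the classical embedding $W\hookrightarrow C(\bar I;H)$ recalled in the introduction. Concretely, once one knows that $t\mapsto u^+(t)$ is strongly measurable in the relevant spaces, both norm estimates reduce to one-line computations; the membership $u^+\in L^2(I;V)$ was in any case already observed in the introduction.

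For the $L^2(I;V)$-part I would argue as follows. Since $u\in W\subset L^2(I;V)\subset L^2(I;H)$, the map $t\mapsto u(t)$ is strongly measurable as an $H$-valued map; composing it with the $1$-Lipschitz map $v\mapsto v^+$ from $H$ to $H$ of Proposition~\ref{prop1}, and using that the composition of a strongly measurable map with a continuous one is again strongly measurable, we obtain that $t\mapsto u^+(t)$ is strongly $H$-measurable. By Proposition~\ref{prop1} we also have $u^+(t)\in V$ for a.e.\ $t\in I$ with $\|u^+(t)\|_V\le\|u(t)\|_V$, so that $t\mapsto\|u^+(t)\|_V$ lies in $L^2(I)$. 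A strongly $H$-measurable map whose values lie a.e.\ in $V$ is strongly $V$-measurable: one checks weak measurability by testing with $\phi\in H\subset V^*$, where $t\mapsto\langle\phi,u^+(t)\rangle_{V^*,V}=(\phi,u^+(t))_H$ is measurable, extends this to all $\phi\in V^*$ by density (pointwise limits of measurable functions being measurable), and invokes Pettis' theorem using that $V=H^1(\Omega)$ is separable. Hence $u^+\in L^2(I;V)$, and squaring and integrating the pointwise bound gives $\|u^+\|_{L^2(I;V)}\le\|u\|_{L^2(I;V)}\le\|u\|_W$.

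For the $C(\bar I;H)$-part I would use the embedding $W\hookrightarrow C(\bar I;H)$, so that $u$ has a representative in $C(\bar I;H)$; composing it with the $1$-Lipschitz map $v\mapsto v^+$ on $H$ then yields $u^+\in C(\bar I;H)$. Taking $r=0$ in $\|u^+(t)-r^+\|_H\le\|u(t)-r\|_H$ gives $\|u^+(t)\|_H\le\|u(t)\|_H$ for every $t\in\bar I$, whence $\|u^+\|_{C(\bar I;H)}=\max_{t\in\bar I}\|u^+(t)\|_H\le\max_{t\in\bar I}\|u(t)\|_H=\|u\|_{C(\bar I;H)}\le\|u\|_W$, the last inequality being the norm estimate of the embedding $W\hookrightarrow C(\bar I;H)$. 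Nothing in the argument is deep; the only step that takes a moment's care is the strong $V$-measurability of $t\mapsto u^+(t)$, and I expect that (mild) point to be the main obstacle — it is handled by the Pettis-theorem argument above, which uses only continuity, not linearity, of the map $v\mapsto v^+$.
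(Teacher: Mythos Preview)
Your argument is correct and follows the paper's line exactly: the paper offers no proof beyond declaring the corollary ``an obvious consequence'' of Proposition~\ref{prop1}, and your sketch merely supplies the suppressed details (pointwise application of Proposition~\ref{prop1} together with the embedding $W\hookrightarrow C(\bar I;H)$). Your careful treatment of the strong $V$-measurability of $t\mapsto u^+(t)$ via Pettis' theorem is sound and is precisely the sort of routine verification the paper leaves implicit.
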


With the same arguments that are classically used to proof Proposition \ref{prop1}, one can prove

\begin{corollary}
 Let $u\in W$ be given with $u_t \in L^2(I;H)$. Then $u^+ \in W$ with $u^+_t \in L^2(I;H)$.
\end{corollary}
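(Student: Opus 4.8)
The plan is to reduce everything to the classical scalar fact that the positive part of an $H^1$-function is again $H^1$, and then to translate the conclusion back into the Bochner language of the evolution triple; the argument then runs exactly parallel to the proof of Proposition~\ref{prop1}. Concretely, I would fix a family $\psi_\varepsilon\in C^1(\R)$, $\varepsilon>0$, with $\psi_\varepsilon(0)=0$, $0\le\psi_\varepsilon'\le 1$ on $\R$, $\psi_\varepsilon\equiv 0$ on $(-\infty,0]$, and $\psi_\varepsilon(s)\to s^+$ pointwise as $\varepsilon\downarrow 0$ (e.g.\ $\psi_\varepsilon(s)=\sqrt{s^2+\varepsilon^2}-\varepsilon$ for $s>0$), which in particular satisfies $0\le\psi_\varepsilon(s)\le s^+$ for all $s\in\R$.

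The first step is the observation that the hypotheses $u\in L^2(I;V)$ and $u_t\in L^2(I;H)$ are equivalent to saying that $u$ together with all of its first-order distributional derivatives $\partial_t u,\partial_{x_1}u,\dots,\partial_{x_n}u$ belongs to $L^2(I\times\Omega)$, i.e.\ $u\in H^1(I\times\Omega)$. The only point to be checked here is that the Bochner derivative $u_t\in L^2(I;H)$ agrees with the distributional time derivative of $u$ on the cylinder: for $\varphi\in C_c^\infty(I\times\Omega)$ one has
\[
\int_{I\times\Omega} u\,\partial_t\varphi
= -\int_I \langle u_t(s),\varphi(s,\cdot)\rangle_{V^*,V}\ds
= -\int_{I\times\Omega} u_t\,\varphi,
\]
the last equality because $u_t(s)\in H$ for a.e.\ $s\in I$.

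On the scalar Sobolev level the chain rule is now elementary: since $\psi_\varepsilon$ is $C^1$ with bounded derivative and $\psi_\varepsilon(0)=0$, one has $\psi_\varepsilon(u)\in H^1(I\times\Omega)$ with $\partial_t\psi_\varepsilon(u)=\psi_\varepsilon'(u)\,\partial_t u$. From $0\le\psi_\varepsilon'\le 1$ we get the uniform bound $\|\partial_t\psi_\varepsilon(u)\|_{L^2(I;H)}\le\|u_t\|_{L^2(I;H)}$, while $0\le\psi_\varepsilon(u)\le u^+\le |u|$ together with dominated convergence yields $\psi_\varepsilon(u)\to u^+$ in $L^2(I;H)$. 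Extracting a subsequence with $\partial_t\psi_\varepsilon(u)\rightharpoonup w$ weakly in $L^2(I;H)$ and passing to the limit in $\int_{I\times\Omega}\psi_\varepsilon(u)\,\partial_t\varphi=-\int_{I\times\Omega}\partial_t\psi_\varepsilon(u)\,\varphi$ for $\varphi\in C_c^\infty(I\times\Omega)$ then identifies $w=\partial_t u^+$. Hence $\partial_t u^+\in L^2(I;H)$, with $\|\partial_t u^+\|_{L^2(I;H)}\le\|u_t\|_{L^2(I;H)}$; combined with $u^+\in L^2(I;V)$, which follows from Proposition~\ref{prop1} applied pointwise in $t$, this gives $u^+\in W$ and $u^+_t\in L^2(I;H)$, as claimed. (Once $u\in H^1(I\times\Omega)$ is available, one may instead simply quote the Stampacchia chain rule for $s\mapsto s^+$ directly and skip the approximation step.)

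The only genuinely delicate point I anticipate is the identification in the first step: making sure that ``$u_t\in L^2(I;H)$ in the Gelfand-triple sense'' is literally the same object as the distributional $\partial_t u\in L^2(I\times\Omega)$, so that the elementary scalar $H^1$-chain rule applies and one need not develop a Bochner-valued version of it. Everything after that is routine and mirrors the classical proof of Proposition~\ref{prop1}.
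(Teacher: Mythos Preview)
Your proposal is correct and is precisely what the paper has in mind. The paper itself does not spell out a proof for this corollary; it only says that ``with the same arguments that are classically used to prove Proposition~\ref{prop1}'' the result follows, and your reduction to $u\in H^1(I\times\Omega)$ followed by the Stampacchia chain-rule approximation is exactly that classical argument carried out on the space-time cylinder.
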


Moreover, in this case, we have $\partial_t u^+\in L^2(Q)$, and we can write for almost all $(t,x)\in Q$
\be\label{eq002}
 \partial_t u^+(t,x) = \begin{cases} \partial_t u(t,x) & \text{ if } u(t,x)>0\\  0 & \text{ if } u(t,x)\le 0. \end{cases}
\ee
Now, if $\partial_t u$ is in $L^2(I; V^*)$ only, the representation \eqref{eq002} makes no sense, as $\partial_t u(t,\cdot)$
is only in $H^1(\Omega)^*$ for almost all $t$.

In the following, we will construct a function $u\in W$ with $\partial_t u\not\in L^2(I;H)$ such that
$\partial_t u^+ \not\in L^2(I;V^*)$.
The key idea is the observation that the mapping $u\mapsto u^+$ for $u\in L^2(\Omega)$ is {\em not} bounded
as mapping from $H^1(\Omega)^*$ to $H^1(\Omega)^*$.

To see this, set $\Omega=(0,1)$. Let us define $\psi_n(x)=\sin(2\pi n x)$.
Then it is well-known that $\psi_n$ converges weakly to zero in $L^2(\Omega)$, thus strongly to zero in $H^1(\Omega)^*$.
However, a short computation shows that
\[
 \int_0^1 \psi_n^+(x)\dx =  \int_0^1 \psi_1^+(x)\dx = \int_0^{1/2} \sin(2\pi x)dx = \frac1\pi \ne 0,
\]
which implies that $\psi_n^+$ converges weakly to the constant function $\hat\psi(x)=1/\pi$ in $L^2(\Omega)$.
Hence, $\psi_n^+$ cannot converge to zero in $H^1(\Omega)^*$.

In the sequel, we will equip
$V$ with the scalar product $(u,v)_V:=\int_\Omega \nabla u\cdot \nabla v + u \cdot v\dx$ and the associated norm.
The space $H$ is equipped with the standard $L^2(\Omega)$ inner product and norm.
We consider  the family of functions
\be\label{defpsin}
\psi_n(x):= \cos (n\pi x), \ x\in \Omega
\ee
for $n\in \mathbb N$.
Now, we will derive quantitative estimates of the norm of $\psi_n$ in $V$, $H$, and $V^*$ for $n\to\infty$.

\begin{lemma}\label{lemx21}
Let $n\in \mathbb N$ be given.
Then it holds
\[
\|\psi_n\|_{V} = \left(\frac{n^2\pi^2+1}2\right)^{1/2}\le n\pi, \quad
\|\psi_n\|_{H} = \frac1{\sqrt2},
 \quad  \|\psi_n\|_{V^*} \le \frac{1}{\sqrt2\,n\pi}
\]
\end{lemma}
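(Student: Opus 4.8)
The plan is to obtain the $H$- and $V$-norms from elementary trigonometric integrals, and to reduce the $V^*$-estimate to a single application of the Cauchy--Schwarz inequality by exploiting that $\psi_n$ is an eigenfunction of the Neumann Laplacian on $(0,1)$.

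First I would record $\psi_n'(x) = -n\pi\sin(n\pi x)$ and use the identities $\int_0^1 \cos^2(n\pi x)\dx = \int_0^1 \sin^2(n\pi x)\dx = \tfrac12$, valid for every $n\in\mathbb N$. These give at once $\|\psi_n\|_H^2 = \tfrac12$ and $\|\psi_n\|_V^2 = \int_0^1 \big((\psi_n')^2 + \psi_n^2\big)\dx = \tfrac{n^2\pi^2}{2} + \tfrac12 = \tfrac{n^2\pi^2+1}{2}$. The bound $\|\psi_n\|_V \le n\pi$ is then just $n^2\pi^2 + 1 \le 2n^2\pi^2$, which holds because $n^2\pi^2 \ge 1$.

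For the dual norm the key observation is that $\psi_n$ satisfies $-\psi_n'' + \psi_n = (n^2\pi^2+1)\psi_n$ on $(0,1)$ together with the homogeneous Neumann conditions $\psi_n'(0) = \psi_n'(1) = 0$ (note $\psi_n'(1) = -n\pi\sin(n\pi) = 0$). Under the Gelfand-triple identification the pairing of $\psi_n\in H\subset V^*$ with $v\in V$ is $\langle \psi_n, v\rangle_{V^*,V} = \int_0^1 \psi_n v\dx$. I would rewrite this as $\tfrac{1}{n^2\pi^2+1}\int_0^1 (-\psi_n''+\psi_n)v\dx$ and integrate by parts once; the boundary terms vanish since $\psi_n'$ is zero at $0$ and $1$, leaving $\langle \psi_n, v\rangle_{V^*,V} = \tfrac{1}{n^2\pi^2+1}\int_0^1(\psi_n' v' + \psi_n v)\dx = \tfrac{1}{n^2\pi^2+1}(\psi_n,v)_V$. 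Cauchy--Schwarz in $V$ together with the value of $\|\psi_n\|_V$ already computed yields $|\langle\psi_n,v\rangle_{V^*,V}| \le \tfrac{1}{n^2\pi^2+1}\|\psi_n\|_V\|v\|_V = \tfrac{1}{\sqrt2\,\sqrt{n^2\pi^2+1}}\|v\|_V \le \tfrac{1}{\sqrt2\,n\pi}\|v\|_V$, and taking the supremum over $v\ne 0$ gives the claim.

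The only step that is not a routine computation is the $V^*$-estimate. A direct integration by parts in $\int_0^1 \psi_n v\dx$ gains nothing; one must first insert the eigenvalue factor $1/(n^2\pi^2+1)$, and it is essential that $\psi_n$ was chosen as a cosine rather than a sine so that the Neumann boundary terms vanish for every test function $v\in H^1(0,1)$, with no boundary condition imposed on $v$. (The same argument in fact gives the sharp value $\|\psi_n\|_{V^*} = \big(\sqrt2\,\sqrt{n^2\pi^2+1}\big)^{-1}$, attained at $v = \psi_n$, but the stated inequality is all that is needed in the sequel.)
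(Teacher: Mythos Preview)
Your proof is correct and follows essentially the same route as the paper. For the $V^*$-estimate the paper phrases the argument via the Riesz representative, i.e.\ it identifies $z=\frac{1}{n^2\pi^2+1}\psi_n$ as the solution of $(z,v)_V=(\psi_n,v)_H$ and then reads off $\|\psi_n\|_{V^*}=\|z\|_V$; your integration-by-parts identity $(\psi_n,v)_H=\frac{1}{n^2\pi^2+1}(\psi_n,v)_V$ is exactly this Riesz relation written out, so the two arguments coincide.
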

\begin{proof}
 The first two identities can be verified with elementary calculations. To prove the third, consider the solution $z\in V$
 of $(z,v)_V = (\psi_n,v)_H$ for all $v\in V$. Then it follows $\|\psi_n\|_{V*}  =\|z\|_V$.
 The function $z$ is given by $z=\frac1{n^2\pi^2+1}\psi_n$, and hence
 the third estimate follows from the first.
\end{proof}

Let us show that the $V^*$-norm of $\psi_n^+$ is bounded away from zero.

\begin{lemma}\label{lemx22}
There is $C>0$ such that
\[
 \|\psi_n^+\|_{V^*} \ge C  \quad\forall n.
\]
\end{lemma}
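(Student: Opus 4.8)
The plan is to use the duality characterization
\[
\|\psi_n^+\|_{V^*} = \sup\{\langle \psi_n^+, v\rangle_{V^*,V} : v\in V,\ \|v\|_V \le 1\},
\]
and to exhibit one fixed test function that already sees the non-vanishing weak limit of $\psi_n^+$ discussed above. The obstruction to $\psi_n^+\to 0$ in $V^*$ is exactly that the averages $\int_\Omega \psi_n^+\dx$ stay bounded away from zero, so the natural choice is the constant function $v\equiv 1$.

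First I would record that $\|1\|_V = 1$: with the inner product $(u,v)_V=\int_\Omega \nabla u\cdot\nabla v + uv\dx$ and $\Omega=(0,1)$ one has $\|1\|_V^2 = \int_0^1 1\dx = 1$. Since the $V^*$–$V$ pairing restricted to $H$ coincides with the $L^2(\Omega)$ inner product, it follows that
\[
\|\psi_n^+\|_{V^*} \ \ge\ \frac{\langle \psi_n^+, 1\rangle_{V^*,V}}{\|1\|_V} \ =\ \int_0^1 \psi_n^+(x)\dx .
\]

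Next I would evaluate this integral. The substitution $t=n\pi x$ gives $\int_0^1 \psi_n^+(x)\dx = \frac1{n\pi}\int_0^{n\pi}\cos^+ t\dt$, and since $\cos^+$ is $2\pi$-periodic with $\int_0^{2\pi}\cos^+ t\dt = 2$ and $\int_0^{\pi}\cos^+ t\dt = 1$, splitting $[0,n\pi]$ into full periods (treating $n$ even and odd separately) yields $\int_0^{n\pi}\cos^+ t\dt = n$ for every $n\in\mathbb N$. Hence $\int_0^1\psi_n^+\dx = 1/\pi$ for all $n$, and the lemma holds with $C=1/\pi$.

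I do not expect a real obstacle here; the only mild point is the elementary bookkeeping of the periodic integral. If one wishes to avoid even that, one may simply discard the incomplete last period to obtain $\int_0^1 \psi_n^+\dx \ge \tfrac1\pi - \tfrac{c}{n}$ for a universal constant $c$, which is bounded below by a positive constant for large $n$ and, after shrinking $C$ to also cover the finitely many small $n$ (each of which has $\int_0^1\psi_n^+\dx>0$), for all $n$.
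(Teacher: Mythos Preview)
Your proof is correct and in fact simpler than the paper's. Both arguments begin with the same computation $\int_0^1 \psi_n^+\dx = 1/\pi$. You then test directly against the constant function $1\in V=H^1(0,1)$, note that $\|1\|_V=1$, and obtain at once $\|\psi_n^+\|_{V^*}\ge 1/\pi$. The paper instead tests against the tent function $v_e(x)=\min(4x,1,4(1-x))$, estimating $\langle\psi_n^+,v_e\rangle \ge (\psi_n^+,1)_H - \|\psi_n^+\|_H\,\|v_e-1\|_H$ and then dividing by $\|v_e\|_V$. That extra step would be needed if $V$ were $H_0^1(\Omega)$, where constants are not admissible test functions; since here $V=H^1(\Omega)$, your direct choice is legitimate and delivers a sharper explicit constant.
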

\begin{proof}
Let $e\in H$ be defined by $e(x)=1$.
Then we have
\[
\begin{split}
(\psi_n^+,e)_H&=
 \int_0^1 \psi_n^+(x)\dx = \int_0^1 (\cos(n\pi x))^+\dx\\
 &=  n\int_0^{1/2n} \cos(n\pi x)\dx
 =  \frac{1}{\pi}.
\end{split}
\]
Let now $v_e\in V$ be defined by $v_e(x)=\min(4x,\,1,\,4(1-x))$.
Then it holds $\|v_e-e\|_H^2 = 2\int_0^{1/4}(4x)^2 \dx = \frac{1}{6}$.
Thus, we can estimate
\[
 \langle \psi_n^+, v_e\rangle_{V^*,V} \ge (\psi_n^+,e)_H - \|\psi_n^+\|_H\|v-e_e\|_H \ge \frac1\pi - \frac1{\sqrt{12}}
 = 0.0296\dots \ge \frac15.
\]
Here, we used $ \|\psi_n^+\|_H \le \|\psi_n\|_{H} =1/\sqrt2 $.
The lower bound implies that $\|\psi_n^+\|_{V^*} \ge \frac15 \|v_e\|_V^{-1}$, and the claim is proven.
\end{proof}

Let us now introduce a family of functions on small time intervals, which will be used to define
the counterexample by means of an infinite series.

\begin{lemma}\label{lemx23}
Let $I:=(0,1)$.
 Let $\phi\in H_0^1(I)$ be given. Define
\be\label{defphin}
 \phi_n(t):=n(n+1) \cdot \phi(n(n+1)t-n).
 \ee
 Then it holds $\operatorname{supp}\phi_n\subset \left(\frac1{n+1},\frac1n\right)$ and
 \begin{align*}
  \|\phi_n\|_{L^1(I)} &= \|\phi\|_{L^1(I)}, &
  \|\partial_t\phi_n\|_{L^1(I)}& \ge n^2 \|\partial_t\phi\|_{L^1(I)}, \\
  \|\phi_n\|_{L^2(I)} &\le \sqrt2 n \|\phi\|_{L^2(I)}, &
  \|\partial_t\phi_n\|_{L^2(I)} &\le \sqrt2 n^3 \|\partial_t\phi\|_{L^2(I)},
\end{align*}
  \end{lemma}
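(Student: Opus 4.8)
The plan is to reduce every claim to one affine change of variables. Write $a:=n(n+1)$; the substitution $s=a\,t-n$ maps $\left(\tfrac1{n+1},\tfrac1n\right)$ bijectively onto $I=(0,1)$ with constant Jacobian $\tfrac{ds}{dt}=a$, so that for any $g\in L^p(I)$ and $p\in\{1,2\}$,
\[
\int_I |g(a\,t-n)|^p\dt=\frac1a\int_0^1|g(s)|^p\ds .
\]
All four norm relations, together with the support statement, will fall out of this identity once the weak derivative of $\phi_n$ has been identified.

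First I would settle the support and the derivative formula. Since $\phi\in H_0^1(I)$ extends by zero to an element of $H^1(\R)$ vanishing off $[0,1]$ (and is continuous with $\phi(0)=\phi(1)=0$), the map $t\mapsto\phi(a\,t-n)$ vanishes whenever $a\,t-n\notin[0,1]$, i.e.\ off $\left[\tfrac1{n+1},\tfrac1n\right]$, and multiplying by the constant $a$ changes nothing; hence $\phi_n\in H_0^1(I)$ with $\operatorname{supp}\phi_n\subset\left[\tfrac1{n+1},\tfrac1n\right]$, and the chain rule for Sobolev functions gives $\partial_t\phi_n(t)=a^2\,\partial_t\phi(a\,t-n)$ for a.e.\ $t\in I$. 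In particular the supports of the $\phi_n$ are pairwise essentially disjoint, which is what the subsequent series construction needs.

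Next I would insert these into the change-of-variables identity. Taking $g=\phi$ yields $\|\phi_n\|_{L^1(I)}=a\cdot\tfrac1a\|\phi\|_{L^1(I)}=\|\phi\|_{L^1(I)}$ and $\|\phi_n\|_{L^2(I)}^2=a^2\cdot\tfrac1a\|\phi\|_{L^2(I)}^2=a\,\|\phi\|_{L^2(I)}^2$; taking $g=\partial_t\phi$ and carrying the extra factor $a^2$ from the derivative formula yields $\|\partial_t\phi_n\|_{L^1(I)}=a\,\|\partial_t\phi\|_{L^1(I)}$ and $\|\partial_t\phi_n\|_{L^2(I)}^2=a^3\,\|\partial_t\phi\|_{L^2(I)}^2$. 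It then remains to replace the exact factors of $a=n(n+1)$ by the stated bounds via the elementary inequalities $n\le n(n+1)\le 2n^2$, valid for all $n\in\mathbb N$, which give $a\ge n^2$, $a^{1/2}\le\sqrt2\,n$, and $a^{3/2}\le(2n^2)^{3/2}$, all of the asserted polynomial order in $n$. I do not expect a genuine obstacle here; the only two points deserving a word of care are the justification that $\phi_n\in H_0^1(I)$ with the chain-rule expression for $\partial_t\phi_n$ (which legitimises the substitution in the bounds for the derivative) and the bookkeeping of the powers of $a$.
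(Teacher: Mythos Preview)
Your approach is exactly what the paper has in mind: its proof reads in full ``This follows by elementary calculations,'' and your change of variables $s=n(n+1)t-n$ together with the chain rule is precisely that calculation. One small point worth flagging: your bound $a^{3/2}\le(2n^2)^{3/2}=2\sqrt2\,n^3$ does not actually recover the stated constant $\sqrt2$ in the last inequality (and indeed for $n=1$ one has $a^{3/2}=2\sqrt2>\sqrt2$), so the lemma's constant there is off by a factor of $2$; this is harmless for the application in Theorem~\ref{theo210}, where only the order $n^3$ matters, and your phrasing ``of the asserted polynomial order'' already hedges appropriately.
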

\begin{proof}
 This follows by elementary calculations.
\end{proof}

Let us now define the function
\be\label{deftheta}
 u(x,t) = \sum_{n=1}^\infty  n^{-3}  \phi_n(t) \psi_n(x).
\ee

\begin{theorem}\label{theo210}
Let $\phi\in H_0^1(I) \setminus\{0\}$ be given with $\phi\ge0$.
Then the function $u$ defined in \eqref{deftheta} with $\psi_n$ and $\phi_n$ from \eqref{defpsin} and \eqref{defphin},
respectively, belongs to $W$.
However, the time derivative of its positive part $\partial_tu^+$ does not belong to $L^1(I;V^*)$.
\end{theorem}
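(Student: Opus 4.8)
The plan is to verify in two stages: first that $u \in W$, and second that $\partial_t u^+ \notin L^1(I;V^*)$. For the first stage, I would estimate the three relevant norms of the partial sums $u_N := \sum_{n=1}^N n^{-3}\phi_n(t)\psi_n(x)$ using the disjointness of the supports of the $\phi_n$, which is guaranteed by Lemma \ref{lemx23}. Concretely, $\|u\|_{L^2(I;V)}^2 = \sum_n n^{-6}\|\phi_n\|_{L^2(I)}^2\|\psi_n\|_V^2 \lesssim \sum_n n^{-6}\cdot n^2 \cdot n^2 = \sum_n n^{-2} < \infty$, using the bounds $\|\phi_n\|_{L^2(I)} \le \sqrt2\, n\|\phi\|_{L^2(I)}$ from Lemma \ref{lemx23} and $\|\psi_n\|_V \le n\pi$ from Lemma \ref{lemx21}. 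Similarly, $\partial_t u = \sum_n n^{-3}\partial_t\phi_n(t)\psi_n(x)$ and $\|\partial_t u\|_{L^2(I;V^*)}^2 = \sum_n n^{-6}\|\partial_t\phi_n\|_{L^2(I)}^2\|\psi_n\|_{V^*}^2 \lesssim \sum_n n^{-6}\cdot n^6 \cdot n^{-2} = \sum_n n^{-2} < \infty$, using $\|\partial_t\phi_n\|_{L^2(I)} \le \sqrt2\, n^3\|\partial_t\phi\|_{L^2(I)}$ and $\|\psi_n\|_{V^*} \le (\sqrt2\, n\pi)^{-1}$. Hence the series converges in $W$ and $u \in W$.

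For the second stage, the key observation is that because the $\phi_n$ have disjoint supports and $\phi \ge 0$, on the interval $(\frac1{n+1},\frac1n)$ we have $u(t,x) = n^{-3}\phi_n(t)\psi_n(x)$ with $n^{-3}\phi_n(t) \ge 0$, so $u^+(t,x) = n^{-3}\phi_n(t)\psi_n^+(x)$ there. Thus on each such subinterval $u^+$ is a product of a scalar time-function and a fixed spatial function, and its distributional time derivative (if it existed as an $L^1(I;V^*)$ function) would have to equal $n^{-3}\partial_t\phi_n(t)\psi_n^+(x)$ on that subinterval. I would make this rigorous by testing $\partial_t u^+$ against functions of the form $\eta(t)w(x)$ with $\eta \in C_c^\infty(\frac1{n+1},\frac1n)$ and $w \in V$, using the definition of the weak time derivative and the fact that $u^+ \in C(\bar I;H)$ (from the Corollary), to conclude that on $(\frac1{n+1},\frac1n)$ the $V^*$-valued function $t \mapsto \partial_t u^+(t)$ equals $n^{-3}\partial_t\phi_n(t)\,\psi_n^+$.

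Then I would bound from below:
\[
\|\partial_t u^+\|_{L^1(I;V^*)} \ge \sum_{n=1}^\infty \int_{1/(n+1)}^{1/n} n^{-3}|\partial_t\phi_n(t)|\,\|\psi_n^+\|_{V^*}\d t
= \sum_{n=1}^\infty n^{-3}\|\psi_n^+\|_{V^*}\,\|\partial_t\phi_n\|_{L^1(I)}.
\]
By Lemma \ref{lemx22}, $\|\psi_n^+\|_{V^*} \ge C > 0$, and by Lemma \ref{lemx23}, $\|\partial_t\phi_n\|_{L^1(I)} \ge n^2\|\partial_t\phi\|_{L^1(I)}$; note $\|\partial_t\phi\|_{L^1(I)} > 0$ since $\phi \in H_0^1(I)\setminus\{0\}$. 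Hence the series is bounded below by $C\|\partial_t\phi\|_{L^1(I)}\sum_n n^{-3}\cdot n^2 = C\|\partial_t\phi\|_{L^1(I)}\sum_n n^{-1} = +\infty$, which contradicts $\partial_t u^+ \in L^1(I;V^*)$.

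The main obstacle is the rigorous identification of $\partial_t u^+$ on each subinterval in the second stage: a priori we only know $u^+ \in L^2(I;V)\cap C(\bar I;H)$, not that it has any time derivative in a Bochner space, so the argument must be phrased carefully as "if $\partial_t u^+$ were in $L^1(I;V^*)$, then by testing it would have to coincide locally with $n^{-3}\partial_t\phi_n\psi_n^+$", and then derive the contradiction from the divergent lower bound. The norm estimates in the first stage and the final summation are routine given the preparatory lemmas.
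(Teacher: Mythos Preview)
Your proposal is correct and follows essentially the same approach as the paper: exploit the disjoint supports of the $\phi_n$ to reduce the $W$-norm estimates to the convergent series $\sum n^{-2}$ via Lemmas~\ref{lemx21} and~\ref{lemx23}, then use the explicit product form $u^+ = n^{-3}\phi_n(t)\psi_n^+(x)$ on each subinterval together with Lemma~\ref{lemx22} to get the divergent harmonic-series lower bound for $\|\partial_t u^+\|_{L^1(I;V^*)}$. The only cosmetic difference is that the paper phrases the second stage via the partial sums $u_N^+$ on $(\tfrac{1}{N+1},1)$ and continuity of the integral, whereas you work directly on each subinterval $(\tfrac{1}{n+1},\tfrac{1}{n})$ and identify $\partial_t u^+$ there by testing; your formulation is in fact slightly more explicit about the contradiction argument.
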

\begin{proof}
Let us define the partial sum $u_N:=\sum_{n=1}^\infty \phi_n(t) \psi_n(x)$.
We will exploit the fact that the supports of the functions  $\phi_n$ are distinct.
From the Lemmas \ref{lemx21}, \ref{lemx22}, and \ref{lemx23}, we have
\[
\|u_N\|_{L^2(I;V)}^2 = \sum_{n=1}^N n^{-6} \|\phi_n\|_{L^2(I)}^2 \|\psi_n\|_V^2 \le c \sum_{n=1}^N n^{-6}\cdot n^2\cdot n^2 = c \sum_{n=1}^N n^{-2},
\]
\[
\|\partial_tu_N\|_{L^2(I;V^*)}^2 = \sum_{n=1}^N n^{-6}\|\partial_t\phi_n\|_{L^2(I)}^2 \|\psi_n\|_{V^*}^2 \le c \sum_{n=1}^N n^{-6}\cdot n^6\cdot n^{-2} = c \sum_{n=1}^N n^{-2},
\]
\[
\|\partial_tu_N^+\|_{L^1(I;V^*)} = \sum_{n=1}^N n^{-3}\|\partial_t\phi_n\|_{L^1(I)} \|\psi_n^+\|_{V^*} \ge c \sum_{n=1}^N  n^{-3}  \cdot n^2 \cdot 1 = c \sum_{n=1}^N n^{-1}.
\]
This proves that $(u_N)$ strongly converges in $W$ to $u$.
Since $u=u_N$ on $\left(\frac1{n+1},1\right)$, the weak derivative $\partial_tu^+$ exists almost everywhere on $I$, and belongs to the space $L^1_\mathrm{loc}(I; V^*)$.
Suppose that $\partial_tu^+\in L^1(I;V^*)$ holds. Then by the continuity of the integral it follows
\[
 \|\partial_tu^+\|_{L^1(I;V^*)} = \lim_{N\to\infty} \int_{1/(N+1)}^1 \|\partial_tu^+(t) \|_{V^*}\dt = \lim_{N\to\infty}\|\partial_tu_N\|_{L^1(I;V^*)} \to \infty,
\]
which is a contradiction, hence $\partial_tu^+\not\in L^1(I;V^*)$.
\end{proof}

\section{Positivity of weak solutions to parabolic equations}
\label{secpara}

Let $\Omega\subset \R^n$ be a domain.
Again, we make use of the evolution triple $V=H^1(\Omega)$, $H=L^2(\Omega)$, $V^*=(H^1(\Omega)^*)$.
Due to the counter-example in the previous section,
we cannot apply the well-known integration-by-parts results for functions in $W$ to $u^+$.
In order to prove formula \eqref{eqpi}, we recall the following density result

\begin{proposition}\cite[Lemma 7.2]{roubicek}
The space $C^\infty([0,T], V)$ is dense in $W$.
\end{proposition}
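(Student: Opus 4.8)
The plan is to produce the smooth approximants by mollification in the time variable, after first extending $u$ to a slightly larger interval so that the mollification is well defined up to the endpoints $t=0$ and $t=T$.

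First I would extend. Given $u\in W$, I would define $\hat u$ on $J:=(-T,2T)$ by even reflection about $t=0$ and about $t=T$, that is, $\hat u(t)=u(t)$ for $t\in I$, $\hat u(t)=u(-t)$ for $t\in(-T,0)$, and $\hat u(t)=u(2T-t)$ for $t\in(T,2T)$. Then $\hat u\in L^2(J;V)$ with $\|\hat u\|_{L^2(J;V)}=\sqrt3\,\|u\|_{L^2(I;V)}$. The point that requires care is that the distributional time derivative of $\hat u$ on $J$ carries no singular part at the gluing points $t=0$ and $t=T$ and is given piecewise by the reflected derivative of $u$; this is exactly where one uses that $u$, as an element of $W$, has a representative in $C(\bar I;H)$, so that the one-sided limits of $\hat u$ in $H$ match at $t=0$ and at $t=T$ and the boundary terms produced by testing against a scalar function in $C_c^\infty(J)$ cancel. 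Granting this, $\partial_t\hat u\in L^2(J;V^*)$ with $\|\partial_t\hat u\|_{L^2(J;V^*)}=\sqrt3\,\|\partial_t u\|_{L^2(I;V^*)}$.

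Next I would mollify. Fix $\rho_\varepsilon\in C_c^\infty((-\varepsilon,\varepsilon))$ with $\rho_\varepsilon\ge0$ and $\int_{-\varepsilon}^{\varepsilon}\rho_\varepsilon(s)\ds=1$, and for $0<\varepsilon<T$ set $u_\varepsilon:=\rho_\varepsilon*\hat u$, restricted to $[0,T]$. Since $\hat u\in L^1_{\mathrm{loc}}(J;V)$, the Bochner integral $u_\varepsilon(t)=\int\rho_\varepsilon(s)\,\hat u(t-s)\ds$ lies in $V$ for every $t\in[0,T]$, and differentiating under the integral gives $u_\varepsilon\in C^\infty([0,T];V)$. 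Standard mollifier estimates yield $u_\varepsilon\to\hat u=u$ in $L^2(I;V)$ as $\varepsilon\to0$. Moreover $\partial_t u_\varepsilon=\rho_\varepsilon*\partial_t\hat u$, since mollification commutes with the distributional derivative and $u_\varepsilon$ is smooth, so that $\partial_t u_\varepsilon\to\partial_t\hat u=\partial_t u$ in $L^2(I;V^*)$ as well. Hence $\|u_\varepsilon-u\|_W\to0$, and the density follows.

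The main obstacle is the extension step, more precisely the verification that the reflected function $\hat u$ still has its time derivative in $L^2(J;V^*)$, i.e.\ that no Dirac-type term appears at the junctions $t=0$ and $t=T$. This hinges on the embedding $W\hookrightarrow C(\bar I;H)$; once this continuity is available, the cancellation of boundary terms is a one-line computation. If one prefers not to invoke that embedding, one may instead appeal to a bounded linear extension operator from $W$ to the analogous space over $J$, or perform the reflection only after a preliminary interior mollification; in every variant the delicacy is confined to the behaviour of $u$ near the two endpoints.
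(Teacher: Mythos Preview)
The paper does not give a proof of this proposition at all; it simply cites \cite[Lemma 7.2]{roubicek} and moves on. Your argument is a correct, self-contained proof along the standard lines (extension in time followed by mollification), so there is nothing to compare at the level of approach.

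One remark on the point you yourself flag: invoking the embedding $W\hookrightarrow C(\bar I;H)$ to justify the reflection is harmless in the paper's logical setup, since that embedding is taken as known from the references listed in the introduction. If one wanted to avoid any appearance of circularity, it is in fact enough to observe that for each fixed $v\in V$ the scalar function $t\mapsto (u(t),v)_H$ lies in $W^{1,1}(0,T)$ (this is immediate from the definition of the weak time derivative), hence is absolutely continuous; this already forces the boundary contributions at $t=0$ and $t=T$ to cancel when you test the reflected function against $\phi\in C_c^\infty(J)$, without ever using the full $C(\bar I;H)$ embedding. With that small refinement your argument is entirely self-contained.
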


First, let us prove the integration-by-parts formula for smooth $u$.

\begin{lemma}
Let $u\in W$ with $\partial_t u\in L^2(I;L^2(\Omega))$ be given. Then it holds
\be\label{eq003}
 \int_0^T \langle \partial_t u(t),\ u^+(t)\rangle_{V^*,V}\dt= \frac12\int_0^T \partial_t \|u^+(t)\|_H^2 = \frac12\left( \|u^+(t)\|_{H}^2-  \|u^+(0)\|_{H}^2\right).
\ee
\end{lemma}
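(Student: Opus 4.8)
The plan is to reduce the claim to the case where $\partial_t u^+$ is known to exist as an $L^2(I;H)$ function, which is exactly the situation covered by the earlier Corollary. First I would invoke that corollary: since $u\in W$ with $\partial_t u\in L^2(I;H)$, we have $u^+\in W$ with $\partial_t u^+\in L^2(I;H)$, and moreover the pointwise representation \eqref{eq002} holds, namely $\partial_t u^+(t,x)=\partial_t u(t,x)$ on $\{u>0\}$ and $0$ on $\{u\le0\}$. This already tells us $u^+$ is an admissible function for the standard integration-by-parts (chain rule) formula in $W$.

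Next I would apply the well-known integration-by-parts formula for functions in $W$ (the Gelfand-triple result from \cite{roubicek}, \cite{wloka}) to the function $w:=u^+$: since $w\in W$, the map $t\mapsto\|w(t)\|_H^2$ is absolutely continuous on $[0,T]$ and $\frac{d}{dt}\|w(t)\|_H^2=2\langle\partial_t w(t),w(t)\rangle_{V^*,V}$ for a.e.\ $t$. Integrating over $(0,T)$ gives the outer equality in \eqref{eq003}. That handles the middle and right expressions directly.

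The remaining point — and the one where a small argument is actually needed — is to identify $\langle\partial_t u^+(t),u^+(t)\rangle_{V^*,V}$ with $\langle\partial_t u(t),u^+(t)\rangle_{V^*,V}$ for a.e.\ $t$. Because $\partial_t u^+\in L^2(I;H)$, the duality pairing with $u^+(t)\in V\subset H$ is just the $H$-inner product, so $\langle\partial_t u^+(t),u^+(t)\rangle_{V^*,V}=\int_\Omega \partial_t u^+(t,x)\,u^+(t,x)\dx$. Splitting $\Omega$ into $\{u(t,\cdot)>0\}$ and $\{u(t,\cdot)\le0\}$ and using \eqref{eq002}: on the first set the integrand equals $\partial_t u(t,x)\,u(t,x)$, and on the second set $u^+(t,x)=0$ kills it, so the integral equals $\int_{\{u(t,\cdot)>0\}}\partial_t u(t,x)\,u(t,x)\dx=\int_\Omega \partial_t u(t,x)\,u^+(t,x)\dx=\langle\partial_t u(t),u^+(t)\rangle_{V^*,V}$, again because $\partial_t u(t,\cdot)\in H$ here. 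Integrating this identity in $t$ turns the left-hand side of \eqref{eq003} into $\int_0^T\langle\partial_t u^+(t),u^+(t)\rangle_{V^*,V}\dt$, which by the previous paragraph equals $\frac12\int_0^T\partial_t\|u^+(t)\|_H^2\dt$.

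I do not expect a serious obstacle here: everything rests on the fact that the hypothesis $\partial_t u\in L^2(I;H)$ upgrades $u^+$ to an element of $W$ with an $L^2(I;H)$-derivative, after which the pairings collapse to $H$-inner products and the classical $W$ integration-by-parts formula applies verbatim. The only place to be slightly careful is the measurability/a.e.\ handling of the set $\{u(t,\cdot)>0\}$ as $t$ varies, but this is routine since $u\in L^2(Q)$ and \eqref{eq002} is already an a.e.-in-$Q$ statement; one simply applies Fubini to pass between the pointwise-in-$(t,x)$ identity and the integrated-in-$t$ identity. A minor typo in the statement (the first $\|u^+(t)\|_H^2$ on the far right should read $\|u^+(T)\|_H^2$) should be corrected in passing.
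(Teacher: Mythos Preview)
Your proposal is correct and follows essentially the same route as the paper: invoke the Corollary to obtain $\partial_t u^+\in L^2(I;H)$ together with the pointwise representation \eqref{eq002}, use that representation to replace $\partial_t u\,u^+$ by $\partial_t u^+\,u^+$ in the $Q$-integral, and then apply the standard chain rule/integration-by-parts for $u^+\in W$. The paper's proof is just a more compressed version of exactly these three steps (and your remark about the $T$-versus-$t$ typo is accurate).
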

\begin{proof}
Since $\partial_t u\in L^2(I;L^2(\Omega))$, it holds $\partial_t u^+\in L^2(I;L^2(\Omega))$. With the representation \eqref{eq002} it follows
\[
 \int_Q \partial_t u(x,t) u^+(x,t) \dx\dt= \int_Q \partial_t u^+(x,t)  u^+(x,t)  \dx\dt= \frac12\int_0^T \partial_t \|u^+(t)\|_H^2\dt,
\]
which proves the claim.
\end{proof}

\begin{lemma}\label{lem33}
Let $u\in W$ be given. Then it holds
\[
 \int_0^T \langle \partial_t u(t),\ u^+(t)\rangle_{V^*,V}\dt= \frac12\int_0^T \partial_t \|u^+(t)\|_H^2 = \frac12\left( \|u^+(t)\|_{H}^2-  \|u^+(0)\|_{H}^2\right).
\]
\end{lemma}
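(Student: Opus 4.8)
The plan is to argue by density, combining the cited density of $C^\infty([0,T],V)$ in $W$ with the special case already proven in the previous lemma. First I would pick a sequence $(u_k)\subset C^\infty([0,T],V)$ with $u_k\to u$ in $W$. Each $u_k$ has $\partial_t u_k\in L^2(I;L^2(\Omega))$, so \eqref{eq003} applies and yields
\[
 \int_0^T \langle \partial_t u_k(t),\, u_k^+(t)\rangle_{V^*,V}\dt = \frac12\left(\|u_k^+(T)\|_H^2 - \|u_k^+(0)\|_H^2\right).
\]
It then remains to pass to the limit $k\to\infty$ on both sides; once the equality of the first and last expression is established, the middle expression is just the legitimate rewriting $\frac12\int_0^T\partial_t\|u^+(t)\|_H^2\dt$, since the proven identity shows that $t\mapsto\|u^+(t)\|_H^2$ is absolutely continuous with the indicated derivative.

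For the right-hand side I would use the continuous embedding $W\hookrightarrow C(\bar I;H)$ (the same one underlying the Corollary above): it gives $u_k\to u$ in $C(\bar I;H)$, hence $u_k(T)\to u(T)$ and $u_k(0)\to u(0)$ in $H$. By the Lipschitz continuity of $v\mapsto v^+$ from $H$ to $H$ (Proposition \ref{prop1}) this forces $u_k^+(T)\to u^+(T)$ and $u_k^+(0)\to u^+(0)$ in $H$, so the norms converge and the right-hand side tends to $\frac12\left(\|u^+(T)\|_H^2 - \|u^+(0)\|_H^2\right)$.

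For the left-hand side I would split
\[
 \int_0^T \langle \partial_t u_k,\, u_k^+\rangle\dt = \int_0^T \langle \partial_t u_k - \partial_t u,\, u_k^+\rangle\dt + \int_0^T \langle \partial_t u,\, u_k^+\rangle\dt .
\]
The relevant facts are: (i) by Proposition \ref{prop1}, $\|u_k^+\|_{L^2(I;V)}\le\|u_k\|_{L^2(I;V)}$ stays bounded; (ii) from the pointwise bound $|a^+-b^+|\le|a-b|$ and $u_k\to u$ in $L^2(I;H)$ one gets $u_k^+\to u^+$ strongly in $L^2(I;H)$; (iii) a bounded sequence in the Hilbert space $L^2(I;V)$ that converges strongly in $L^2(I;H)$ converges weakly in $L^2(I;V)$, and the limit must be $u^+$. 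Consequently the first integral is bounded by $\|\partial_t u_k - \partial_t u\|_{L^2(I;V^*)}\,\|u_k^+\|_{L^2(I;V)}\to 0$, and the second converges to $\int_0^T\langle\partial_t u, u^+\rangle\dt$ because $\partial_t u$ is a fixed element of $L^2(I;V^*)=L^2(I;V)^*$ tested against the weakly convergent sequence $u_k^+$. This gives the claimed identity.

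I expect the main obstacle to be precisely step (iii) together with verifying that the duality pairing survives the limit, i.e. that \emph{strong} convergence of $\partial_t u_k$ in $L^2(I;V^*)$ paired with only \emph{weak} convergence of $u_k^+$ in $L^2(I;V)$ suffices. The remedy is the standard "strong times weak" argument just sketched, and the point that actually pins down the weak $L^2(I;V)$-limit is the elementary strong convergence $u_k^+\to u^+$ in $L^2(I;H)$, which follows at once from the $H$-Lipschitz property of the positive part and from $u_k\to u$ in $W\hookrightarrow L^2(I;H)$. Everything else is routine.
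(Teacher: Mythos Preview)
Your proposal is correct and follows essentially the same route as the paper: approximate by $C^\infty([0,T],V)$, apply the smooth case, use $W\hookrightarrow C(\bar I;H)$ together with the $H$-Lipschitz property of $v\mapsto v^+$ for the right-hand side, and pass to the limit on the left via boundedness of $(u_k^+)$ in $L^2(I;V)$ and identification of the weak limit through strong convergence in $L^2(I;H)$. Your explicit splitting of the duality pairing and the ``strong times weak'' justification are exactly the details the paper leaves implicit when it says that both sides of \eqref{eq003} converge.
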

\begin{proof}
 Let $u\in W$ be given. By density, there is $(u_k)$ in $C^\infty([0,T], V)$ with $u_k \to u$ in $W$.
 By continuity of the projection, it follows $u_k^+\to u^+$ in $C([0,T],H)$.

 Moreover, the sequence $u_k^+$ is bounded in $L^2(V)$. Hence, there is a weakly converging subsequence with weak limit $\tilde u$ in $L^2(V)$.
 Due to $u_k^+\to u^+$ in $C([0,T],H)$, it follows $\tilde u = u^+$, and the whole sequence converges weakly, $u_k^+\rightharpoonup u^+$ in $L^2(V)$.

 Since $u_k$ is smooth enough, $u_k$ satisfies \eqref{eq003}.
 Moreover, the left-hand side and the right-hand side in \eqref{eq003} converge for $k\to\infty$, proving the claim.
\end{proof}

Let us remark that this result can be proven using difference quotients, see e.g.\@  \cite[Lemma 2.5]{gruen}.

The integration-by-parts formula \eqref{eqpi} can be applied to prove non-negativity of weak solutions of parabolic equations
with non-negative data.
Let $f\in L^1(I;L^2) + L^2(I; V')$ and $u_0\in H$ be given. Then $u\in W$ is a weak solution of the parabolic equation
with homogeneous Neumann boundary conditions
\be\label{eqpara}
 \partial_t u - \Delta u =f \text{ on } Q, \quad \partial_n u=0 \text{ on } I\times \partial \Omega, \quad  u(0) = u_0(x),
\ee
if the following equation is satisfied for all $v\in V$ and almost all $t\in I$
\[
 \langle \partial u(t), \, v\rangle_{V^*,V}  + \int_\Omega \nabla u(x,t)\nabla v(x) \dx
  =  \langle f(t), \, v\rangle_{V^*,V}.
  \]

\begin{theorem}
 Let $f\in L^1(I;L^2(\Omega)) + L^2(I; V^*)$ be given, with $f\ge0$, which is $\langle f,v \rangle \ge0$ for all $v\in L^2(V)\cap  C(I;H)$ with $v\ge0$.
 Let $u_0\in H$ be given with $u_0\ge0$.
 Let $u$ be a weak solution of the parabolic equation \eqref{eqpara}.
 Then it holds $u\ge 0$.
\end{theorem}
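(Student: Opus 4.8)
\noindent\textit{Sketch of the argument.}\quad The plan is to show that the negative part $u^-:=(-u)^+=\max(-u,0)$ vanishes, which is equivalent to $u\ge0$. Since $-u\in W$, the corollary following Proposition~\ref{prop1} (which says that $w\mapsto w^+$ maps $W$ into $L^2(I;V)\cap C(\bar I;H)$) gives $u^-\in L^2(I;V)\cap C(\bar I;H)$, and the hypothesis $u_0\ge0$ yields $u^-(0)=u_0^-=0$.

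Two ingredients are needed. First, applying the integration-by-parts formula~\eqref{eqpi} --- i.e.\ Lemma~\ref{lem33}, which holds verbatim on any subinterval $(0,\tau)\subset I$ by the same proof --- to the function $-u\in W$ and using $(-u)^+=u^-$ together with $u^-(0)=0$, one obtains
\[
 \int_0^\tau\langle\partial_t u(t),\,u^-(t)\rangle_{V^*,V}\dt=-\tfrac12\|u^-(\tau)\|_H^2\qquad\text{for all }\tau\in(0,T].
\]
Second, by separability of $V$ the weak equation holds, for a.a.\ $t$, for every $v\in V$ simultaneously, so it may be evaluated at $v=u^-(t)$; and testing the sign condition on $f$ against functions $v(t)=\eta(t)\phi$ with $0\le\eta\in C(\bar I)$ and $0\le\phi\in V$ shows that $\langle f(t),\phi\rangle_{V^*,V}\ge0$ for a.a.\ $t$ and all such $\phi$, hence $\langle f(t),u^-(t)\rangle_{V^*,V}\ge0$ for a.a.\ $t$.

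Inserting $v=u^-(t)$ into the weak equation and invoking Proposition~\ref{prop1} applied to $-u(t)$ --- so that $\nabla u^-(t)$ equals $-\nabla u(t)$ on $\{u(t)<0\}$ and vanishes elsewhere, i.e.\ $\nabla u(t)\cdot\nabla u^-(t)=-|\nabla u^-(t)|^2$ a.e.\ --- and then integrating over $(0,\tau)$ (all integrals being finite by the corollary above together with a decomposition $f=f_1+f_2\in L^1(I;H)+L^2(I;V^*)$), the first ingredient gives
\[
 -\tfrac12\|u^-(\tau)\|_H^2-\int_0^\tau\!\int_\Omega|\nabla u^-(t,x)|^2\dx\dt=\int_0^\tau\langle f(t),u^-(t)\rangle_{V^*,V}\dt\ \ge\ 0 .
\]
The left-hand side is non-positive, so both summands vanish; in particular $\|u^-(\tau)\|_H=0$ for every $\tau\in(0,T]$, and $u^-(0)=0$. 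Hence $u^-\equiv0$, that is $u\ge0$.

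The main obstacle is the measure-theoretic bookkeeping in the second ingredient: rigorously justifying that the weak equation --- stated for fixed $v\in V$ and a.a.\ $t$ --- may be evaluated at the $t$-dependent element $u^-(t)$ (done via a countable dense subset of $V$ and the continuity of each term in $v$), and that the globally formulated positivity of $f$ descends to the pointwise inequality $\langle f(t),u^-(t)\rangle\ge0$. Everything else is a direct computation; note in particular that, the parabolic operator having no zeroth-order term, no Gronwall argument is required, and that~\eqref{eqpi} on $(0,\tau)$ is obtained simply by applying Lemma~\ref{lem33} on the shorter interval.
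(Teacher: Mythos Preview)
Your argument is correct and follows essentially the same route as the paper: test the weak formulation with the negative part, use Proposition~\ref{prop1} for the gradient term and Lemma~\ref{lem33} (applied to $-u$) for the time-derivative term, and conclude $u^-\equiv0$. The only differences are cosmetic --- you take $u^-:=(-u)^+\ge0$ whereas the paper sets $u^-:=-(-u)^+\le0$ --- and that you spell out the measure-theoretic justification for inserting the $t$-dependent test function $u^-(t)$ and for passing from the global sign condition on $f$ to a pointwise one, points the paper leaves implicit.
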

\begin{proof}
Let us denote $u^-  = -(-u)^+ \in L^2(V)\cap  C(I;H)$.
 Testing the weak formulation with $u^-$, integrating from $0$ to $t$, and using Proposition \ref{prop1} and Lemma \ref{lem33} yields
\[
\begin{split}
 0&\ge\int_0^t\langle f(s), u^-(s)\rangle_{V^*,V}\ds\\
 &= \int_0^t \langle \partial_t u(s), \, u^-(s)\rangle_{V^*,V} \ds + \int_0^t\int_\Omega \nabla u(x,s)\nabla u^-(x,s) \dx\ds\\
 &=\frac12\left( \|u^-(t)\|_{H}^2-  \|u^-(0)\|_{H}^2\right) + \|\nabla u^-\|_{L^2(0,t;L^2(\Omega))}^2\\
 &\ge\frac12\|u^-(t)\|_{H}^2 .
\end{split}
\]
Hence, it follows $u^-(t)=0$ for almost all $t\in I$, which implies $u^-=0$
almost everywhere on $Q$.
\end{proof}

\bibliography{w0t}
\bibliographystyle{plain}

\end{document}